\newtheorem{theorem}{Theorem}
\newtheorem{lemma}{Lemma}
\newtheorem{remark}{Remark}
\newtheorem{assume}{Assumption}
\title{\LARGE \bf
	On the Design and Analysis of Multivariable Extremum Seeking Control  using Fast Fourier Transform }
\author{Dinesh Krishnamoorthy 
}
\affil{Department of Chemical Engineering, Norwegian University of Science and Technology, 7491 Trondheim, Norway\\
		{\tt\small dinesh.krishnamoorthy@ntnu.no}}
\date{}
\begin{document}
	
	\maketitle

\begin{abstract}                
 This paper proposes a multivariable extremum seeking scheme using Fast Fourier Transform (FFT) for a network of subsystems working towards optimizing the sum of their local objectives, where  the overall objective is the only available measurement. Here, the different inputs are perturbed with different dither frequencies, and the power spectrum of the  overall output signal obtained using FFT  is used to estimate the steady-state cost gradient w.r.t. each input. The inputs for the subsystems are then updated using integral control in order to drive the respective gradients to zero. This paper provides analytical rules for designing the FFT-based gradient estimation algorithm and analyzes the stability properties  of the resulting extremum seeking scheme for the static map setting. The effectiveness of the proposed FFT-based multivariable extremum seeking scheme is demonstrated using two examples, namely, wind farm power optimization problem, and a heat exchanger network for industrial waste-to-heat recovery. 
\end{abstract}


\section{Introduction}

Extremum seeking control (ESC) has emerged as a popular real-time optimization (RTO) and adaptive control  tool to optimize the steady-state output of a plant, despite lack of a mathematical model.  
The most popular approach to estimating the gradient is the classical sinusoidal perturbation-based extremum seeking control that dates back to the work of Draper and Li \cite{draper1951FirstESC}, and later gained popularity with the work of Krstic and Wang \cite{krstic2000stability}. The classical extremum seeking control  is based on perturbing the input with a slow sinusoidal dither signal, such that the plant appears as a static map. A high-pass filter is used to remove the DC-component of the cost measurement, and the resulting detrended sinusoidal response of the output signal (with zero mean) is correlated with the perturbation signal. The product of the two sinusoids will have a DC component, which is extracted using a low-pass filter. This  is approximately the steady-state cost gradient as explained in \cite{krstic2000stability,tan2010ESC}. 
The classical extremum seeking control thus involves the use of a high-pass and low-pass filter such that the dither signal is in the pass band of the filters \cite{tan2010ESC}. 

Although extremum seeking control was historically developed for single-input single-output (SISO) systems,  there has been several developments in  extremum seeking control for  large-scale networks with many subsystems, with applications ranging from wind farm power optimization\cite{menon2014collaborative,ebegbulem2018power,ghaffari2014power,creaby2009maximizing}, maximum power point tracking for photovoltaic grids \cite{ghaffari2015multivariable,ghaffari2014power}, oil production network \cite{DK2019DYCOPS,pavlov2017CDC}, heat exchanger networks \cite{mu2017real,zitte2018extremum}, formation flight control \cite{binetti2003formation} etc. to name a few. 
Development of extremum seeking control for such multi-input systems can be broadly categorized into multivariable ESC and distributed ESC. 

Multivariable extremum seeking control considers a multi-input single-output (MISO) case, where the objective is to estimate the steady-state gradient of the overall cost w.r.t. each input  \cite{ghaffari2015multivariable,ghaffari2012multivariable}. 
Distributed extremum seeking on the other hand assumes that each subsystem in the network has access to its local cost measurement. Each subsystem then locally employs  single-input single-output (SISO) extremum seeking loop, 
and a consensus estimation algorithm is used for coordination among the different subsystems as explained in  \cite{dougherty2014extremum,guay2018distributed,menon2014collaborative}. 
This paper deals with the former class, where we assume that the different subsystems in a network work towards optimizing the sum of their local objectives, however the local cost of each subsystem is not available. 
Maximizing  total power generation from a wind  farm or photovoltaic grid, maximizing oil and gas production from a network of wells, maximizing heat recovery in a heat exchanger network, minimizing total energy consumption in a processing plant or in a commercial building are a few examples of such systems. 

As noted in \cite{krstic2000stability,tan2010ESC}, the choice of the filter cut-off frequencies in the classical extremum seeking approach depends on the input perturbation frequency.
With multiple inputs and a single output, tuning and designing the filter cut-off frequencies to retrieve the gradient w.r.t each input channel  from the single cost measurement may not be trivial in practice. This is due to the fact that the  output signal now contains the frequency components of the different  input perturbations. Moreover, using the same low-pass filter on each channel may also lead to additional harmonics in the gradient estimation as noted in \cite{ghaffari2015multivariable}. In short, good performance of the multivariable extremum seeking control  using this approach requires  good tuning of the filter cut-off frequencies, which can be challenging and time consuming, especially in large-scale networks with several inputs. 


A  natural and systematic approach to analyze the effect of different frequency components in a signal  is to use the Fourier transform for spectral analysis, which maps the time series data into a series of frequencies. 
 The underlying idea behind the proposed multivariable  FFT-based extremum seeking scheme is as follows: 
For a static map as shown in Fig.~\ref{Fig:Idea}, at $ u<u^* $ a sinusoidal perturbation in the input $ a\sin \omega t $, generates a sinusoidal variation in the output at the same frequency $ b \sin \omega t $. The power spectrum of the output signal would then have an amplitude of $ b $ at frequency $ \omega $. Similarly, for $ u>u^* $,  a sinusoidal perturbation in the input $ a\sin \omega t $, generates a sinusoidal variation in the output at the same frequency shifted by 180$ ^\circ $ resulting in $ b' \sin (\omega t+ \pi) $. Also in this case, the power spectrum of the output signal would  have an amplitude of $ b' $ at $ \omega $. For $ u \approx u^* $, a sinusoidal perturbation in the input $ a\sin \omega t $, generates a periodic signal with twice the frequency. Consequently, the power spectrum would indicate an amplitude of zero at $ \omega $, since the power of the signal at this frequency is zero.  Instead the power spectrum would have a non-zero amplitude  at $ 2\omega$. For example, if we have a  quadratic static map $ y = (u-u^*)^2 $, adding $ a\sin \omega t $ at $ u^* $ would result in an output variation of $y =a^2sin^2(\omega t)=0.5a^2(1 - \cos(2\omega t))   $, which clearly only has a frequency component at $ 2\omega $. The amplitude spectrum, which shows the strength of the variations of a signal at different frequencies is thus a very good  indicator of the gradients at the  different perturbation frequencies. Another advantage of using FFT-based extremum seeking is that it is robust to slowly varying unmeasured disturbances as long as they are not in the same frequency as any of the input perturbations.

 \begin{figure}
	\centering
	\includegraphics[width=0.5\linewidth]{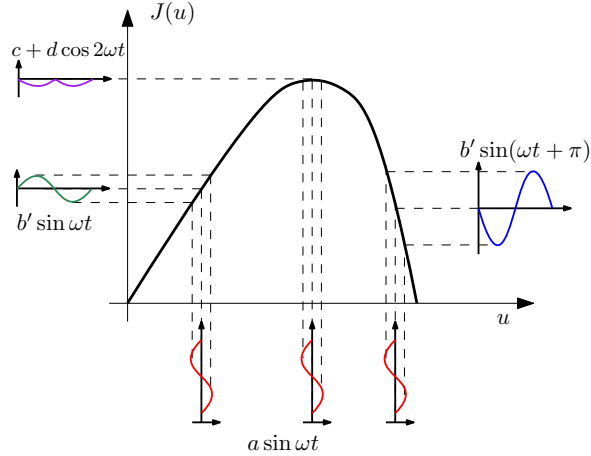}
	\caption{Effect of sinusoidal perturbation of the input-output static map at different points.}\label{Fig:Idea}
 \end{figure}
 In the context of extremum seeking control, Fourier transform has been predominantly used to extract the phase information in order to compensate for the phase shift introduced by the plant dynamics, such that the static map assumption can be relaxed \cite{munteanu2009wind,atta2015extremum,bizon2016global}.  For example, the authors in  \cite{atta2015extremum} estimate the different harmonics using a Kalman filter, and drive the high frequency harmonics to zero when the perturbation and the plant dynamics are in the same timescale.
On the other hand, in this paper we use FFT to directly extract the gradient information w.r.t multiple input channels from the amplitude spectrum using the two-timescale separation principle (just as in the case with classical extremum seeking control). The phase spectrum may be used only to estimate the sign of the gradient. 
The intuitive nature of FFT and the amplitude spectrum has previously led to the use of gradient estimation using FFT in a few engineering applications such as \cite{corti2014automatic,beaudoin2006bluff}. However, these works  provide little information into designing the FFT-based extremum seeking scheme and  analyze its  properties, which this paper aims to address.

To this end, this paper formalizes  the multivariable extremum seeking scheme using fast Fourier transform (FFT) and studies its stability properties.  In particular, the main contribution of this paper is to provide analytical  rules on the choice of the tuning parameters, namely, the window frame length $ N $, and the integral gain  $ K_{i} $ that ensures closed-loop stability of the proposed  FFT-based extremum seeking scheme.  The effectiveness of the FFT-based extremum seeking scheme is demonstrated using a wind farm optimization example, as well as on a industrial waste-to-heat recovery network. 
\section{Extremum seeking control using Fast Fourier Transform (FFT)}

Consider a network of $ n $ subsystems working towards  optimizing the sum of their local objectives 
The  inputs are denoted by $ \mathbf{u}= [u_{1},\dots,u_{n}]^{\mathsf{T}}\in \mathbb{R}^{n} $, and the overall  cost function $ J(\mathbf{u}) \in \mathbb{R}  $ is denoted by the static map $
J(\mathbf{u}) = \sum_{i=1}^{n} \xi_{i}({u}_{i})$
where $ \xi_{i}({u}_{i}) $ denotes the unknown functions representing the local cost for each subsystem $ i $. 
The objective is to minimize the overall cost of the network
\begin{equation}\label{Eq:Cost}
\min_{\mathbf{u}\in \mathbb{R}^{n} } \; J(\mathbf{u}) =  \sum_{i=1}^{n} \xi_{i}({u}_{i})
\end{equation}
\begin{assume}\label{asm:costmeas}
	The overall cost measurement $ J \in \mathbb{R}$ is additively separable, but the local cost measurements $ \xi_{i}({u}_{i}) $ are not available.
\end{assume}
The above assumption implies that we consider a multi-input single-output (MISO) plant. 
Without loss of generality, we state the following assumption for a minimization problem. 
\begin{assume}\label{asm:smoothness}
 The nonlinear map  $ J(\mathbf{u}) $ is sufficiently smooth and continuously differentiable with a unique minimum at $ J(\mathbf{u}^*) $, such that
 \begin{subequations}
  \begin{align}
	\frac{\partial J}{\partial {u}_{i}} ({u}_{i}^*)  & = 0 \quad \forall i \in \mathbb{I}_{1:n}\label{Eq:NCO} \\
	\frac{\partial^2 J}{\partial{u_{i}}^2}  ({u}_{i}^*) &>0 \quad \forall i \in \mathbb{I}_{1:n}
\end{align}
Furthermore $ \exists \;\alpha_2 \geq \alpha_1 >0 $ such that
\begin{align}
	\alpha_1 \tilde{ u_{i}}^2  &\leq \frac{\partial J}{\partial {u}_{i}}  \tilde u_{i} \leq 	\alpha_2 \tilde{ u_{i}}^2   \label{Eq:Attractiveness}
\end{align}
 \end{subequations}
where  $ \tilde{u}_{i} :={u}_{i} - {u}_{i}^* $
\end{assume}

In this paper, we study the multivariable extremum seeking control of such a process, without the need for a mathematical  model of the plant. The objective is to steer the system to the equilibrium $ \mathbf{u}^* $ using only the  overall cost measurement $ J $. 

\begin{figure*}[t]
	\begin{subfigure}{0.5\linewidth}
		\centering
		\includegraphics[width=0.9\linewidth]{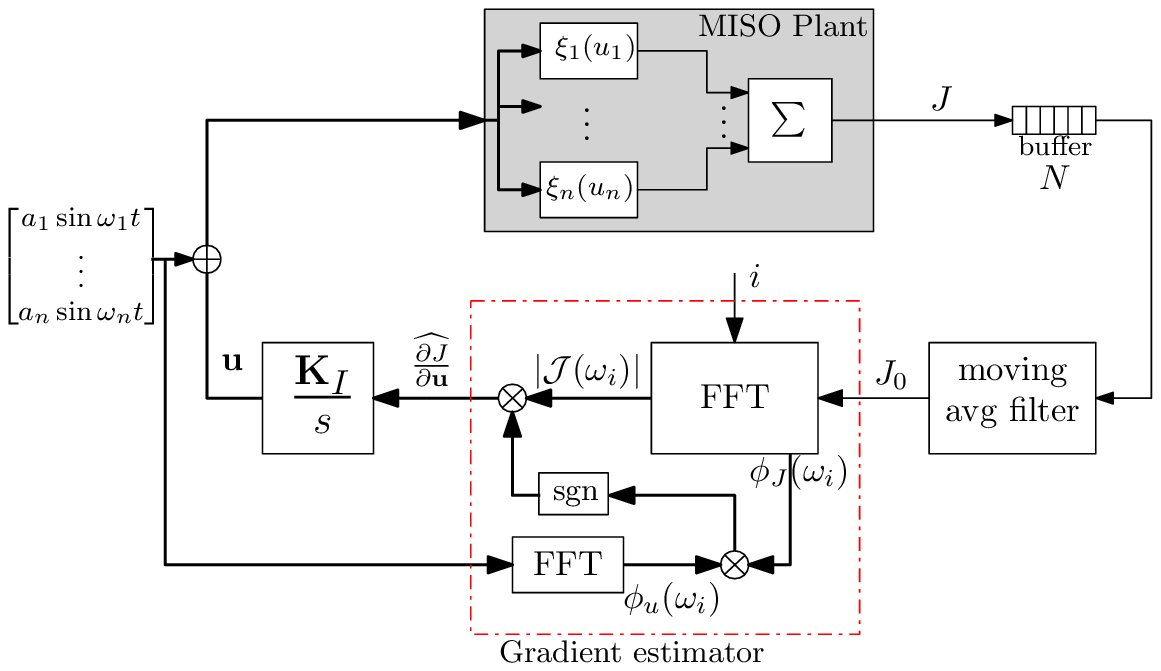}
		\caption{}\label{Fig:BlockDiagram}
	\end{subfigure}
	\begin{subfigure}{0.5\linewidth}
		\centering
		\includegraphics[width=0.9\linewidth]{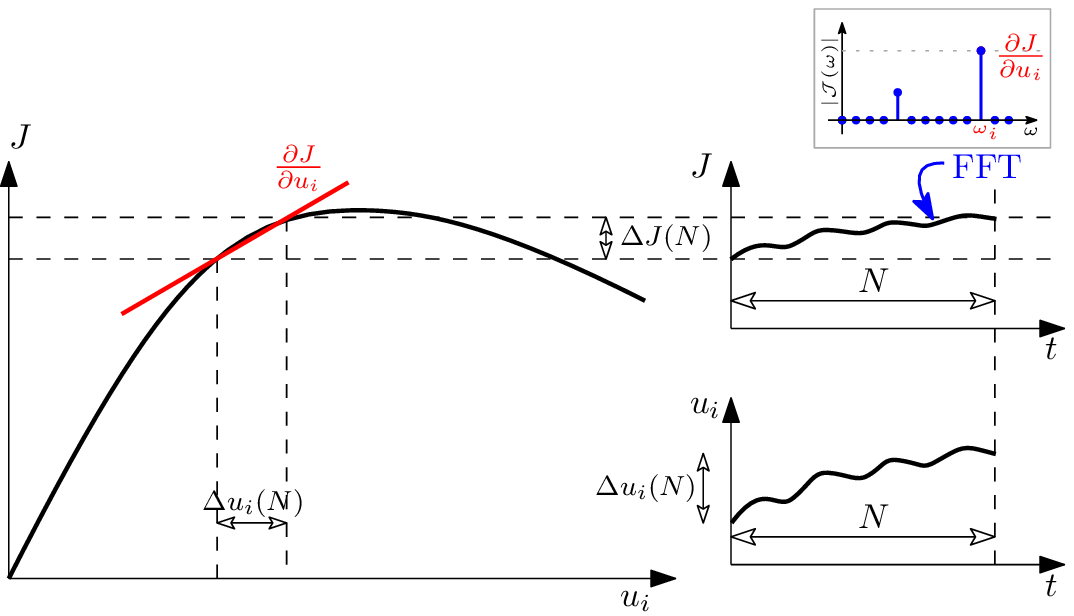}
		\caption{  }\label{Fig:LocalLinearGrad}
	\end{subfigure}
	\caption{ (a) Block diagram of the proposed FFT-based extremum seeking scheme. (b) Illustration of the local gradient estimation with respect to $ u_{i} $ using spectral analysis of the past $ N  $ samples of cost measurement. }
\end{figure*}

Each input channel is superimposed with a unique  sinusoidal dither signal  $ a_{i}\sin \omega_{i}t $, with amplitude $ a_{i} $ and frequency $ \omega_{i} $. 
The overall output cost measurement $ J $ is then composed  of the frequency components $ \omega_{i} $, for all $ i= 1,\dots,n $.  Using spectral analysis we can decompose the output signal $ J $ into different frequency components. 
To do this, we use a sliding window of fixed length with $ N $ past data points to get the $ N- $point DFT using the finite set of  cost measurement $ [J(0),\dots,J(N-1)]^{\mathsf{T}} $, and input signals $ [u_{i}(0),\dots,u_{i}(N-1)]^{\mathsf{T}} $ for all $ i  \in \mathbb{I}_{1:n} $. 
To perform FFT, the signals need to be detrended such that they have zero mean. Hence, the  DC-component from the time domain data  is removed, for example by using a moving average filter.  The detrended signals with zero-mean are denoted by the superscript $ ({\cdot})^0 $. 

\begin{remark}
The detrending step is analogous to the high pass filter in the classical extremum seeking control, where the  purpose of the high pass filter is also to remove the DC component from the cost measurement $ J $ \cite{krstic2000stability}. 
\end{remark}

Let the past $ N $ discrete samples of the detrended output cost measurement $ J^{0} $, be denoted by $ [J^{0}(0),J^{0}(1),\dots,J^{0}(N-1)]^{\mathsf{T}} $. The discrete-time Fourier transform (DTFT) reads as
	$\mathcal{J}(\omega) = \sum_{k =-\infty}^{\infty}J^{0}(k) e^{-j\omega\,k} $
and the $ N $-point DFT  samples $ 	\mathcal{J}(\omega) $ at $ N $ frequency components given by,
\begin{equation}\label{Eq:DFT}
\mathcal{J}(l) = \sum_{k = 0}^{N-1}J^{0}(k) e^{-j\frac{2\pi}{N}l\,k} \qquad \forall l = 0,\dots,N-1
\end{equation}

$ \mathcal{J}(l)  $ is a complex valued function, which has a corresponding magnitude spectrum denoted by $ |\mathcal{J}(l) |$, and a phase spectrum denoted by $ \phi_J(l) $.
The magnitude of the cost function for the $ n $ input dither frequencies $ \omega_{i} $  for all $ i \in \mathbb{I}_{1:n} $ can then be obtained from the single-sided amplitude spectrum $ 2|\mathcal{J}(l)| $ for all $ l = 1,\dots,N/2 $, where we extract the frequency components of interest from the $ N $-point DFT. That is, the magnitude of the cost gradient w.r.t. the $ i^{th} $ input channel is given by
\begin{equation}\label{Eq:Amplitude}
 \left|\frac{\partial J}{\partial u_{i}}\right|  =  \frac{2}{a_{i}}|\mathcal{J}({\omega_{i}}N/{2\pi})| \quad \forall \; i \in \mathbb{I}_{1:n}
\end{equation}
Hereafter, we denote $ |\mathcal{J}({\omega_{i}}N/{2\pi})| $ simply as $  |\mathcal{J}(\omega_{i})|  $ for the sake of notational simplicity. Since the amplitude spectrum $  |\mathcal{J}(\omega_{i})| >0$, the phase information $ \phi_{J}(\omega_{i}) $ with respect to the phase of the input signal $ \phi_{u_{i}}(\omega_{i}) $ is used to determine the sign of the gradient. 
The phase $ \phi_{J}(\omega_{i}) $ is obtained from the FFT, and similarly, the phase of the input perturbations $ \phi_{u_{i}}(\omega_{i}) $ can be obtained using FFT of the dither signals. 

Together, the gradient of the cost w.r.t to the $ i^{th} $ input is then estimated as follows:
\begin{equation}\label{Eq:FFTgrad}
\widehat{\frac{\partial {J}}{\partial u_{i}} } =  \frac{2}{a_{i}}\left|\mathcal{J}(\omega_{i})\right| \textup{sgn}\left[\frac{\phi_{J}(\omega_{i})}{ \phi_{u_{i}}(\omega_{i}) }  \right] , \; \forall i \in \mathbb{I}_{1:n}
\end{equation}
This is schematically shown in Fig.~\ref{Fig:BlockDiagram} for the $ i^{th} $ input, where spectral analysis of the cost measurement from the past $ N $ samples is used to estimate the local  gradient $ \widehat{\frac{\partial {J}}{\partial u_{i}} }  $. 
Alternatively, the estimated gradient can equivalently be computed directly as
\begin{equation}
\widehat{\frac{\partial {J}}{\partial u_{i}} }   =  \frac{\Re(\mathcal{J}(\omega_{i})) }{\Re(\mathcal{U}_{i}(\omega_{i})) }, \; \forall i\in \mathbb{I}_{1:n}
\end{equation}
where $ \Re(\cdot) $ denotes the real part of a complex number, and $ \mathcal{U}_{i}(\omega) $ is the DFT of the $ i^{th} $ input signal. 
\begin{remark}
	The Euler's formula establishes the fundamental relationship between the complex exponential function and the trigonometric functions in \eqref{Eq:DFT}.  Thus, the product of the detrended cost $ J_{0} $ and exponential terms  in \eqref{Eq:DFT} can be seen as  analogous to the product of the detrended output signal and the sinusoidal dither in classical extremum seeking control, which provides the demodulation effect at the perturbation frequency. 
\end{remark}

Once the gradient is estimated, the inputs can be  updated using a gradient descent step, which is simply an  integral  controller to drive the estimated gradients to zero,
\begin{equation}\label{Eq:ControlLaw}
{\dot{u}_{i}} = - {K}_{i} \widehat{ \frac{\partial {J}}{\partial{u}_{i}}}, \forall i \in \mathbb{I}_{1:n}
\end{equation}
where $ K_{i} $ is the integral gain  for the $ i^{th} $ input.  
The perturbation signals are added to the nominal input $ {u}_{i} $ computed by the integral controller, and the resulting input  is applied on the plant. 
The sketch of the proposed FFT-based extremum seeking control is shown in Algorithm~\ref{FFTESC:alg}.

\begin{algorithm}[t]
	\caption{FFT-based Extremum seeking control}
	\label{FFTESC:alg}
	\begin{algorithmic}[1]
		\Require $ N $,  $ \omega_{1},\dots,\omega_{n} $,  $ a_{1},\dots,a_{n} $  and $ {K}_{i} $
		\State	$ J^{0} $ $\leftarrow$ Detrend the output cost measurement $ J $
		\State $ \mathcal{J}({l}) = \sum_{k = 0}^{N-1}J^{0}(k) e^{-j\frac{2\pi}{N}l\,k}  $  \Comment perform FFT
		\For { $ i = 1,\dots,n $ (in parallel)}
		\State $ \mathcal{U}_{i}({l}) = \sum_{k = 0}^{N-1}u^{0}_{i}(k) e^{-j\frac{2\pi}{N}l\,k}  $  \Comment perform FFT
		\State $\left| \mathcal{J}(\omega_{i})\right|,\phi_{J}(\omega_{i})   \leftarrow$ amplitude  and phase of the cost  at the $ i^{th} $ input frequency
		\State $\left| \mathcal{U}_{i}(\omega_{i})\right|,\phi_{u_{i}}(\omega_{i})   \leftarrow$ amplitude  and phase of the input at the $ i^{th} $ input frequency
		\State $ \widehat{\frac{\partial J}{\partial u_{i}}}  \leftarrow  \frac{2}{a_{i}}\left|\mathcal{J}(\omega_{i})\right| \textup{sgn}\left[\frac{\phi_{J}(\omega_{i})}{ \phi_{u_{i}}(\omega_{i}) }  \right]  $
		\State $ {\dot{u}_{i}} = - {K}_{i} \widehat{ \frac{\partial {J}}{\partial{u}_{i}}} $
		\EndFor
		\Ensure $\mathbf{u} $ 
	\end{algorithmic}
\end{algorithm}

\section{Stability Analysis}

\begin{assume}[{Choice of dither signal}]\label{asm:uniqueDither}
	\begin{enumerate}[(i)]
		\item The output signal is persistently exciting of at least order $ 2n $. 
		\item  $ \max(\{\omega_i\}) $ is chosen such that the  plant can be assumed to be a static map.
		\end{enumerate}
\end{assume}
Assumption~\ref{asm:uniqueDither}(i) can be satisfied if the perturbation frequencies of the different input channels are mutually independent, i.e. $ \omega_{i} \neq \omega_{j} $, $ 2\omega_{i} \neq \omega_{j} $,  $ \omega_{i}+\omega_{j}\neq \omega_{k} $ for any distinct $ i $, $ j  $, and $ k $.
This means that the frequency of one input perturbation must not be the same or lie in the harmonics of perturbation of the other input channels. This is to ensure that unique persistence of excitation condition is satisfied \cite{ghaffari2012multivariable}. 
With Assumption~\ref{asm:uniqueDither}(ii) we restrict our analysis in this section to the static map setting, where the dynamics of the extremum seeking scheme are governed by \eqref{Eq:ControlLaw}.

When we use an $ N $-point DFT to obtain the amplitude spectrum, it is based on spectral analysis on a finite set of data with length $ N $. The choice of the window length is   an important factor which affects the accuracy of the gradient estimation. If $ N $ is not carefully chosen, then it is possible that the $ N $-point DFT may not sample the DTFT precisely at one or more of the perturbation frequencies. That is,  the discrete frequency array $ l = \{0,\dots,N-1\}$ may not contain one or more of the perturbation frequencies exactly (cf. \eqref{Eq:Amplitude}). This leads to \textit{spectral leakage} resulting in bias in the spectral values, and consequently bias in the estimated gradient.   The choice of the window length and its effect on the gradient estimation is formalized in the following theorem. 
\begin{theorem}[Minimum window length $ N $]\label{thm:sizeN}
	Given assumption~\ref{asm:uniqueDither}, 
 the length of the moving window frame $ N $ that ensures  no spectral leakage in the N-point DFT of  the output signal $ J^0 $ 
  is given by 
	\begin{equation}\label{Eq:LCM}
	N = \gamma \;\textup{lcm}\left(\left\{\frac{2\pi}{\omega_{i}}\right\}_{i=1}^{n} \right)
	\end{equation}
for some $ \gamma \in \mathbb{Z}_{+} $ and where $ \textup{lcm}(\cdot) $ denotes the least common multiple.
\end{theorem}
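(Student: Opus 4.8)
The plan is to reduce the no-leakage requirement to a bin-alignment condition for each perturbation frequency, and then translate that arithmetic condition into the lcm formula. First I would recall the mechanism behind spectral leakage: the $N$-point DFT in \eqref{Eq:DFT} samples the underlying DTFT only on the discrete grid $\{2\pi l/N : l = 0,\dots,N-1\}$. Leakage occurs precisely when a frequency present in $J^0$ falls strictly between two grid points, so the first step is to show that the contribution of a single spectral component stays confined to one bin if and only if that component sits exactly on the grid.

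To make this precise I would isolate a single sinusoidal term of $J^0$ at frequency $\omega_i$ and examine its contribution $\sum_{k=0}^{N-1} e^{j(\omega_i - 2\pi l/N)k}$ to $\mathcal{J}(l)$. This is a finite geometric series whose closed form is the Dirichlet kernel $\frac{1 - e^{j\omega_i N}}{1 - e^{j(\omega_i - 2\pi l/N)}}$. The numerator $1 - e^{j\omega_i N}$ vanishes for every $l$ exactly when $\omega_i N/(2\pi) \in \mathbb{Z}$, in which case the sum is zero at all bins except the one where the denominator also vanishes; that is, the component feeds a single bin and no other, and there is no leakage. Conversely, if $\omega_i N/(2\pi) \notin \mathbb{Z}$, the kernel is nonzero at every bin and the energy spreads across the spectrum, producing exactly the bias described just before the theorem. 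Hence no leakage $\iff \omega_i N/(2\pi)\in\mathbb{Z}$, i.e. $N$ is an integer multiple of the period $T_i := 2\pi/\omega_i$.

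Next I would argue that it suffices to impose this condition on the fundamental frequencies $\{\omega_i\}$. By Assumption~\ref{asm:uniqueDither}(i) the detrended cost also carries second harmonics $2\omega_i$ and intermodulation terms $\omega_i\pm\omega_j$, so in principle these must lie on the grid as well; but if $T_i \mid N$ for all $i$, then $(2\omega_i)N/(2\pi) = 2\,(\omega_i N/2\pi)$ and $(\omega_i\pm\omega_j)N/(2\pi) = \omega_i N/2\pi \pm \omega_j N/2\pi$ are automatically integers, so every relevant spectral line lands on a bin. Thus the full no-leakage requirement collapses to the single condition that $N$ be a common multiple of $\{T_i\}_{i=1}^n$. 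Finally, the set of positive common multiples of $\{T_i\}$ is exactly $\{\gamma\,\textup{lcm}(\{T_i\}) : \gamma\in\mathbb{Z}_+\}$ by the defining property of the least common multiple, which yields \eqref{Eq:LCM}.

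The main obstacle I anticipate is the equivalence in the second paragraph: cleanly carrying out the Dirichlet-kernel argument (in particular handling the degenerate case where both numerator and denominator vanish, via the $l'$H\^opital/limit value $N$ at the matching bin) and making explicit the implicit commensurability hypothesis that the periods $T_i$ are integers when measured in samples, since this is precisely what is needed for $\textup{lcm}(\cdot)$ to be well defined. Everything else is elementary number theory once the bin-alignment characterization is in hand.
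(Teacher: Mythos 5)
Your proof is correct and rests on the same key fact as the paper's: the $N$-point DFT of a truncated sinusoid is a frequency-shifted Dirichlet kernel (periodic sinc), which is sampled exactly at its zeros --- hence confined to a single bin --- precisely when $\omega_i N/(2\pi)\in\mathbb{Z}$; you obtain this by summing the geometric series directly, while the paper reaches the identical kernel by writing the truncation as multiplication by a rectangular window and convolving in the frequency domain. Your version is in fact slightly more complete, since you also argue the converse (leakage whenever a frequency is off-grid), check that the harmonics $2\omega_i$ and intermodulation terms $\omega_i\pm\omega_j$ generated by the nonlinear map automatically land on bins, and flag the commensurability of the periods needed for $\textup{lcm}(\cdot)$ to be well defined --- none of which the paper's proof addresses explicitly.
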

\begin{proof}
We can consider the finite segment of a signal $ x_{N}(k) $ as a product of an infinite time series $ x(k) $ and a rectangular data window \[ w(k): = \begin{cases}
	1 & 0\le k \le N-1\\
	0 & \text{elsewhere}
\end{cases}  \]
The Fourier transform of $ x_{N}(k) $ is then given by 
\begin{equation*}
	\mathcal{X}_{N}(\omega) = \mathcal{X}(\omega) \otimes \mathcal{W}(\omega) 
\end{equation*}
where $ \otimes $ denotes convolution. For a rectangular window, we have
\[ \mathcal{W}(\omega) = \left[ \frac{\sin(\omega N/2)}{\sin(\omega/2)} \right] e^{-j\omega(N-1)/2} \] 
where the periodic sinc function has mainlobes and sidelobes in the DTFT. 
Sampling this at $ \omega = 2\pi l/N $ for $ l = 0,\dots,N-1 $ one can see that
\[ \mathcal{W}(l) = \begin{cases}
	N & l = 0\\
	0 & l = 1,\dots,N-1
\end{cases} \]
which implies that the sidelobes are sampled at their zero values. Convolution with the sinusoidal signal $ a_{i}\sin\omega_{i}t $ gives a frequency-shifted periodic sinc centered around $ \pm \omega_i $ scaled by the amplitude $ a_{i} $.  By choosing $ N $ as \eqref{Eq:LCM} we ensure $ \omega_i \in \left\{\frac{2\pi l}{N} \right\}_{l=0}^{N-1}$, and the integer number of cycles for all the frequency components within the $ N $ samples ensures that there is no spectral  leakage to nearby DFT bins \cite{mulgrew1999digital,manolakis2000statistical}. 	
\end{proof}
An alternative  proof of  Theorem~\ref{thm:sizeN} can be given by looking at the
 $ N $-point DFT in \eqref{Eq:DFT}, where it can be seen that this is periodic with period $ N $. The inverse DFT of \eqref{Eq:DFT} expressed as 
\begin{equation}\label{Eq:invDFT}
	\hat{J}^{0}(k) = \frac{1}{N}\sum_{l=0}^{N-1} \mathcal{J}(l) e^{j\frac{2\pi}{N}lk}
\end{equation}
tells us that the reconstructed time series $ \hat{J}^{0}(k) $ is also periodic with period $ N $.  This is because sampling in the time domain results in periodicity in the frequency domain, and vice versa as given by the DFT sampling theorem. This implies that the two end points of the time series samples, $ J^{0}(0) $ and $ J^{0}(N) $, are interpreted as though they were connected together \cite[Ch. 5]{manolakis2000statistical}. Therefore, by choosing $ N $ as the least common multiple of the perturbation time periods \eqref{Eq:LCM}, we can reconstruct $ \hat{J}^{0}(k)  = J^{0}(k) $ that contains integer number of full time periods of  all the perturbation frequencies $ \left\{\frac{2\pi}{\omega_{i}}\right\}_{i=1}^{n} $. 
If $ N $ is not chosen according to \eqref{Eq:LCM}, then the reconstructed $ \hat{J}^{0}(k) $ contains discontinuities, which results in higher order harmonics in the power spectrum. In this case, the DFT samples the sidelobes of the DTFT at non-zero values leading  to  \textit{spectral leakage}. Furthermore, spectral leakage due to signals with non-integer number of cycles can also spread to other frequencies and can mask other signals, especially if their amplitude is  relatively small, causing gradient estimation errors w.r.t other inputs as well.
Therefore to avoid such issues,  the minimum window length $ N_{min} $ is given by \eqref{Eq:LCM} with $ \gamma = 1 $ in the FFT-based gradient estimation scheme. 

\begin{remark}[Minimum resolution]
In the multivariable setting, when we have several frequency components, the difference between any two perturbation  frequencies $ \omega_i $ and $ \omega_j $ must satisfy
\begin{equation}\label{thm:resolution}
	\left|\omega_i - \omega_j\right| > \frac{2\pi}{N-1}
\end{equation}
in order for it to be distinguishable in the amplitude  spectrum. This is due to the windowing effect, where the DTFT has mainlobes which has a width of approximately $ 2\pi /(N-1) $ \cite{manolakis2000statistical}. 
\end{remark}

\begin{remark}
	The proposed approach can be seen as a variant of the Gabor transform. That is, the use of a moving window to estimate the time-varying local gradient using FFT is similar to using a Gabor transform \cite{gabor1946theory,sejdic2009time,rotstein1999gabor}, where instead of a moving Gaussian window, we use a  moving rectangular window. 
\end{remark}

Due to the local linear approximation within the window,  using the past $ N $ data samples  obtained at time $ t $, may lead to some deviations in the estimated gradient $ \widehat{\frac{\partial {J}}{\partial u_{i}}}  $ from  the true gradient $ \frac{\partial {J}}{\partial u_{i}}(u_{i}(t))   $ as the nominal input $  u_{i}(t) $ changes. See the simple illustrative example in Appendix~\ref{app:example} that  clearly demonstrates this. The gradient estimation error due to the moving window is formalized in the following Lemma.  

\begin{lemma}[Boundedness of the gradient estimation error \cite{hunnekens2014dither}]\label{thm:EstErr}
 Consider the FFT-based gradient estimation scheme given by \eqref{Eq:DFT}-\eqref{Eq:FFTgrad}, where the gradient  estimated using the past $ N $ data samples is denoted as $ \widehat{\frac{\partial {J}}{\partial{u}_i} }$. Let the true steady-state cost gradient w.r.t the inputs $  u_{i}(t) $ be denoted as $ \frac{\partial {J}}{\partial{u}_i} ( u_{i}(t))  $. Given Assumptions~\ref{asm:costmeas}, \ref{asm:smoothness}, and \ref{asm:uniqueDither}, and $ N $ chosen according to Theorem~\ref{thm:sizeN}, the gradient estimation error  \begin{equation}\label{Eq:err}
\epsilon_{i}(t) :=\widehat{ \frac{\partial {J}}{\partial{u}_{i}}}- \frac{\partial {J}}{\partial {u}_{i}}({u}_{i}(t))
\end{equation}
is  bounded by,
\begin{align}\label{Eq:errBound}
|\epsilon_{i}(t)| \leq  K_iN &\max_{\tau \in [0,N]} \left| \mathbf{H}_i({u}_i(t-\tau)) \right| \max_{\tau \in [0,2N]} \left| \frac{\partial {J}}{\partial{u}_i}({u}_i(t-\tau)) \right|  
\end{align}
where $ \mathbf{H}_{i}(\cdot) := \frac{\partial^2 J}{\partial u_{i}}(\cdot)$ is the Hessian of the static map. 
\end{lemma}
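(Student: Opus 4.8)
The plan is to treat the finite-window FFT demodulation as a local-slope operator and then convert the lag introduced by the moving window into the stated product bound via the mean value theorem and the integral control law \eqref{Eq:ControlLaw}. First I would establish the key structural identity: under the static-map assumption (Assumption~\ref{asm:uniqueDither}(ii)) and with $N$ chosen so that there is no spectral leakage (Theorem~\ref{thm:sizeN}), the demodulation \eqref{Eq:FFTgrad} returns, up to the local linear approximation illustrated in Fig.~\ref{Fig:LocalLinearGrad}, a weighted average of the true steady-state slopes over the window; that is, $\widehat{\frac{\partial J}{\partial u_i}}(t)$ is a convex combination of $\{\frac{\partial J}{\partial u_i}(u_i(s)) : s\in[t-N,t]\}$. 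This is the step inherited from \cite{hunnekens2014dither}, and it is what makes the gradient estimate reflect the window history rather than the instantaneous value at $t$.

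Given this identity, I would write the error $\epsilon_i(t)$ as the same weighted average of the differences $\frac{\partial J}{\partial u_i}(u_i(s)) - \frac{\partial J}{\partial u_i}(u_i(t))$ for $s\in[t-N,t]$. Applying the mean value theorem to each difference produces a factor $\mathbf{H}_i(u_i(\eta_s))$ with $\eta_s\in[t-N,t]$ multiplying the input displacement $u_i(s)-u_i(t)$; the Hessian factor is then dominated by $\max_{\tau\in[0,N]}|\mathbf{H}_i(u_i(t-\tau))|$, which supplies the first maximum in \eqref{Eq:errBound}.

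Next I would bound the displacement by integrating the control law: since $\dot{u}_i = -K_i\widehat{\frac{\partial J}{\partial u_i}}$ and $|t-s|\le N$, we have $|u_i(s)-u_i(t)| \le K_i N \max_{r\in[t-N,t]}|\widehat{\frac{\partial J}{\partial u_i}}(r)|$, which produces the prefactor $K_iN$. Finally, applying the same local-slope identity to each estimate $\widehat{\frac{\partial J}{\partial u_i}}(r)$ appearing here, whose windows reach back to $t-2N$, bounds them by $\max_{\tau\in[0,2N]}|\frac{\partial J}{\partial u_i}(u_i(t-\tau))|$; this is the origin of the extended horizon $2N$ in the second maximum. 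Multiplying the three bounds and using that a convex-combination average cannot exceed the termwise maximum yields \eqref{Eq:errBound}.

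I expect the main obstacle to be the rigorous justification of the first step, namely that the finite-window demodulation genuinely recovers a window-average of the true slope. This requires showing that, once $N$ is chosen per Theorem~\ref{thm:sizeN} and the dithers satisfy the persistency and independence conditions of Assumption~\ref{asm:uniqueDither}(i), the higher-order harmonic and cross-channel contributions fall outside the extracted bin, so that the amplitude at $\omega_i$ is set by the local slope alone. The subsequent mean value theorem and integral estimates are then routine.
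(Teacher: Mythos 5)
Your proposal is correct and follows essentially the same route as the paper's proof: the mean value theorem supplies the Hessian factor over the window, the integral control law bounds the input displacement by $K_iN$ times the largest gradient estimate over the window, and unwinding those estimates back one more window length yields the $2N$ horizon. The only difference is one of emphasis: you make explicit (as a convex-combination/window-average claim to be justified) what the paper simply asserts, namely that the FFT estimate equals the true gradient at some point in the past $N$ samples.
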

\begin{proof}
	Since Assumption~\ref{asm:smoothness} holds, we can use the mean-value theorem to estimate the absolute error, where for some $  u_{i} $ in the interval $ \Delta  u_{i}(t) := \max_{\tau \in [0,N] } {u}_i(t-\tau) - \min_{\tau \in [0,N] } {u}_i(t-\tau)$, we have
	\begin{equation}\label{Eq:MVT}
	 |\epsilon_{i}(t)| \leq  \max_{\tau \in [0,N]} \left| \mathbf{H}_i({u}_i(t-\tau)) \right| \Delta  u_{i}(t) 
	\end{equation} 
	 Furthermore, since $  u_{i} $ is given by the integral controller $ \eqref{Eq:ControlLaw} $ and the estimated gradient is equal to the true gradient somewhere in the past $ N $ samples, 
	\begin{equation}\label{Eq:errBound2}
	\Delta {u}_{i}(t) \leq K_iN \max_{\tau \in [0,2N]} \left| \frac{\partial {J}}{\partial{u}_i}({u}_i(t-\tau)) \right| 
	\end{equation} 
	Combining \eqref{Eq:MVT} and \eqref{Eq:errBound2} proves the result. 
\end{proof}
It is also intuitively understood that the error bound is higher for large $ \Delta  u_{i} $ as shown in \eqref{Eq:MVT}.  From \eqref{Eq:err} and \eqref{Eq:errBound}  it can be seen that the gradient estimation error depends on the true gradient occurring somewhere in the last $ 2N $ samples. This can be seen as a time delay, and in order to study the stability properties with time delay, we  apply the Razumikhin condition  to the Lyapunov stability analysis \cite{gu2003stability}.
Given a positive definite function $ V(x) $, the Razumikhin condition states that a time delay system  with maximum time delay $ T $ is asymptotically stable if there exists a continuous non-decreasing function $ R(s) >s $, $ s>0 $ such that 
\begin{equation}\label{Eq:Razumikhin}
	\dot{V}(x(t))<0, \text{ whenever }R(V(x(t))) \geq \max_{\tau \in [0,T] } V(x(t-\tau))
\end{equation}

\begin{theorem}[Stability] \label{thm:convergence}
Consider the FFT-based extremum seeking scheme given by \eqref{Eq:DFT}-\eqref{Eq:ControlLaw}. Let Assumptions~\ref{asm:costmeas}, \ref{asm:smoothness} and \ref{asm:uniqueDither} hold, and $ N $ chosen according to Theorem~\ref{thm:sizeN}. 
Then for some $ d>1 $ there exists bounded  $ K_i  \in \left(0,\frac{\alpha_1}{\alpha_2 N\bar{\mathbf{H}}_{i} d}\right)$ for all $ i\in \mathbb{I}_{1:n} $   such that 
\[ \lim\limits_{t\rightarrow\infty}  u_{i}(t)  =  u_{i}^*  \]
\end{theorem}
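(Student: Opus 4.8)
The plan is to exploit the additive separability of the cost (Assumption~\ref{asm:costmeas}): since $\frac{\partial J}{\partial u_i}$ depends on $u_i$ alone, and Assumption~\ref{asm:uniqueDither}(i) guarantees no spectral cross-talk between channels, each input evolves according to its own scalar closed loop $\dot u_i = -K_i \widehat{\partial J/\partial u_i}$. Hence it suffices to prove $\tilde u_i(t)\to 0$ for a single generic channel, treated as a scalar system with delay. I would begin from the error decomposition $\widehat{\partial J/\partial u_i} = \frac{\partial J}{\partial u_i}(u_i(t)) + \epsilon_i(t)$ of \eqref{Eq:err}, so that the loop reads $\dot{\tilde u}_i = -K_i\big(\frac{\partial J}{\partial u_i}(u_i(t)) + \epsilon_i(t)\big)$: a nominal gradient-descent term plus a delayed perturbation $\epsilon_i$ controlled by Lemma~\ref{thm:EstErr}.

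Next I would take the Razumikhin candidate $V_i = |\tilde u_i|$ (the quadratic $\tfrac12\tilde u_i^2$ works equally well, only replacing $d$ by $\sqrt d$) and compute, for $\tilde u_i\neq 0$, $\dot V_i = -K_i\,\textup{sgn}(\tilde u_i)\big(\frac{\partial J}{\partial u_i}(u_i(t)) + \epsilon_i(t)\big)$. The attractiveness bound \eqref{Eq:Attractiveness} supplies two ingredients: the sign-definiteness $\textup{sgn}(\tilde u_i)\frac{\partial J}{\partial u_i}(u_i) = \big|\frac{\partial J}{\partial u_i}(u_i)\big| \ge \alpha_1|\tilde u_i|$ for the nominal term, and the upper bound $\big|\frac{\partial J}{\partial u_i}(u_i)\big| \le \alpha_2|\tilde u_i|$ which, inserted into Lemma~\ref{thm:EstErr} with $\bar{\mathbf H}_i := \sup|\mathbf H_i|$, turns the error bound into $|\epsilon_i(t)| \le K_i N\bar{\mathbf H}_i\alpha_2 \max_{\tau \in [0,2N]}|\tilde u_i(t-\tau)|$. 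Combining these gives $\dot V_i \le -K_i\alpha_1|\tilde u_i| + K_i^2 N\bar{\mathbf H}_i\alpha_2\max_{\tau\in[0,2N]}|\tilde u_i(t-\tau)|$.

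Finally I would invoke the Razumikhin condition \eqref{Eq:Razumikhin} with maximum delay $T=2N$ and $R(s)=d\,s$, $d>1$. It suffices to verify $\dot V_i<0$ in the regime $R(V_i(t)) \ge \max_{\tau\in[0,2N]}V_i(t-\tau)$, i.e. $\max_{\tau\in[0,2N]}|\tilde u_i(t-\tau)| \le d\,|\tilde u_i(t)|$. Substituting this into the inequality above yields $\dot V_i \le -K_i|\tilde u_i|\big(\alpha_1 - K_i N\bar{\mathbf H}_i\alpha_2 d\big)$, which is strictly negative for $\tilde u_i\neq 0$ exactly when $K_i < \frac{\alpha_1}{\alpha_2 N\bar{\mathbf H}_i d}$ — the stated interval. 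The Razumikhin theorem then delivers asymptotic stability of $\tilde u_i=0$, hence $\lim_{t\to\infty}u_i(t)=u_i^*$ for every channel.

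The step I expect to be most delicate is not the Lyapunov bookkeeping but recognizing \emph{why} small gains stabilize: because the bound in Lemma~\ref{thm:EstErr} is itself proportional to $K_i$, the delayed perturbation enters $\dot V_i$ at order $K_i^2$ while the useful descent enters at order $K_i$, so shrinking $K_i$ renders the estimation error negligible relative to the nominal gradient pull — this is the quantitative mechanism producing the admissible interval. Two technical points must be handled carefully: (a) $\bar{\mathbf H}_i$ must be a genuine uniform bound on the Hessian along the trajectory, which requires first certifying forward invariance of a sublevel set of $V_i$ (so that $u_i(t)$ remains where smoothness gives a finite Hessian); the Razumikhin decrease itself supplies this invariance. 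And (b) the non-smoothness of $|\tilde u_i|$ at the origin is resolved either by using the upper Dini derivative or by switching to the quadratic candidate and absorbing the resulting $\sqrt d$ into the free parameter $d>1$.
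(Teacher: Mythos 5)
Your proposal is correct and follows essentially the same route as the paper: the same decomposition $\widehat{\partial J/\partial u_i} = \frac{\partial J}{\partial u_i}(u_i(t)) + \epsilon_i(t)$, the same use of Lemma~\ref{thm:EstErr} together with \eqref{Eq:Attractiveness} to bound the delayed error term, and the same Razumikhin argument yielding $K_i < \frac{\alpha_1}{\alpha_2 N \bar{\mathbf{H}}_i d}$. Your choice of $V_i = |\tilde u_i|$ with $R(s)=ds$ is merely a cosmetic variant of the paper's $V = \tfrac12 \tilde u_i^2$ with $R(s)=d^2 s$ (as you note yourself), and your added remarks on forward invariance and the Dini derivative are sensible refinements rather than departures.
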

\begin{proof}
	To analyze the stability of the optimum $  u_{i} =  u_{i}^* $, consider the candidate Lyapunov function,
	\begin{equation}\label{Eq:Lyapunov}
	V( u_{i}(t)) =  \frac{1}{2}\left(\tilde{ u_{i}}(t)\right)^2
	\end{equation}
	with $ \tilde{ u_{i}}(t) :=  u_{i}(t)- u_{i}^* $, and the derivative is written as,
		\begin{align*}
	\dot{V}( u_{i}(t)) &= \tilde{ u_{i}}(t)\dot{ u_{i}}(t) = -\tilde{ u_{i}}(t) K_i\widehat{\frac{\partial {J}}{\partial{u}_i} }  \\
	&= -\tilde{u_{i}}(t)K_i \left(\frac{\partial {J}}{\partial{u}_i} ( u_{i}(t) )+ \epsilon(t)\right) 
	\end{align*}
Upper bounding the Hessian by $ \bar{\mathbf{H}}_{i}:= \max\left| \mathbf{H}_i\right|  $ and  using Lemma~\ref{thm:EstErr}, we can write this as 
	\begin{align}
	\dot{V}( u_{i}(t)) &\leq -\tilde{u_{i}}(t)K_i \frac{\partial {J}}{\partial{u}_i} ( u_{i}(t) )
+ |\tilde{u_{i}}(t)|K_i^2N  \bar{\mathbf{H}}_{i} \max_{\tau \in [0,2N]} \left| \frac{\partial {J}}{\partial{u}_i}({u}_i(t-\tau)) \right| \nonumber\\
	&\leq -K_{i}\alpha_1 \tilde{u_{i}}^2 (t)+ |\tilde{u_{i}}(t)|K_i^2N  \bar{\mathbf{H}}_{i}\max_{\tau \in [0,2N]} \left| \alpha_2(\tilde{u}_i(t-\tau)) \right| \label{Eq:Lyap}
	\end{align}
where the last inequality comes from \eqref{Eq:Attractiveness} in Assumption~\ref{asm:smoothness}, which  implies   
$|\alpha_1\tilde{ u_{i}}|  \leq \left| \frac{\partial J}{\partial {u}_{i}} \right|  \leq 	 |	\alpha_2 \tilde{ u_{i}}| $.
Let $ R(s) := d^2s$ for some $ d>1 $, then using the Razumikhin condition (cf. \eqref{Eq:Razumikhin}), for asymptotic stability we require $ \dot{V}(u_{i})<0 $ if
\begin{align*}
	&\frac{1}{2} \max_{\tau \in [0,2N]}\tilde{ u_{i}}^2(t-\tau)  \leq \frac{1}{2} d^2 \tilde{ u_{i}}^2(t)  
	\Leftrightarrow \max_{\tau \in [0,2N]} |\alpha_{2}\tilde{ u_{i}}(t-\tau)|  \leq d \alpha_2| \tilde{ u_{i}}(t) |
\end{align*}
Using this in \eqref{Eq:Lyap}, we get
\begin{align*}
	\dot{V}( u_{i}(t)) &\leq -K_{i}\alpha_1 \tilde{u_{i}}^2 (t)+ K_i^2N  \bar{\mathbf{H}}_{i} d \alpha_2| \tilde{ u_{i}}(t) |^2 =\alpha_1 \tilde{u_{i}}^2(t) \left( -K_{i} + K_i^2N  \bar{\mathbf{H}}_{i} d \frac{\alpha_2}{\alpha_1}\right)
\end{align*}
For $ \dot{V}(u_{i})<0 $, we need $ \left( -K_{i} + K_i^2N  \bar{\mathbf{H}}_{i} d \frac{\alpha_2}{\alpha_1}\right)<0 $, and since this is a minimization problem, we need $ K_{i}>0 $ in \eqref{Eq:ControlLaw}. Therefore, $ \lim\limits_{t\rightarrow\infty}  u_{i}(t)  =  u_{i}^*  $, if for all $ i \in \mathbb{I}_{1:n} $
\begin{align*}
0<K_{i}<	\frac{\alpha_1}{\alpha_2 N\bar{\mathbf{H}}_{i} d}
\end{align*}

	
\end{proof}

To summarize, Theorem~\ref{thm:sizeN} provides a lower bound on the length of the window frame $ N $ as a function of the different input perturbation frequencies such that the gradient w.r.t all the inputs can be retrieved exactly from the amplitude spectrum of the output signal, and Theorem~\ref{thm:convergence} provides bounds on the controller gains $ K_{i} $ for each input channel such that the closed-loop system is stable under the static map condition.

\section{Case Example: Wind farm optimization}
In this section, we present a wind farm optimization example that demonstrate the performance of the proposed FFT-based gradient estimation scheme. The simulation results were performed in \texttt{MATLAB v2020b}, and the code can be found in the GitHub repository \cite{Github_ESCFFT}. 
In this example, we consider a wind farm with $ n = 6 $ identical wind turbines as shown in Fig.~\ref{Fig:WindFarm}.  
We assume the wind direction is along the $ x $ co-ordinate and orthogonal to the $ y $ co-ordinate, as shown in Fig.~\ref{Fig:WindFarm}. The wind turbines are uniform with diameter $ D=80m $ and blade roughness coefficient $ k= 0.075 $. The wind is in the direction of the positive $ x $-axis with a steady wind velocity of $ V_{\infty} = 8m/s$. The reader is referred to \cite{marden2013model} for the wind farm model used in the simulation.

The wake interaction within a wind farm can significantly affect the overall power generation. The objective is thus to compute the optimal  axial induction factor for the wind turbines $ \mathbf{u} = [u_{1},\dots,u_{n}] $, such that the total power generation from the wind farm $P(\mathbf{u})$ is maximized. We assume that the total  power  produced by the wind farm $ P \in \mathbb{R} $ is the only measurement available. 
\begin{figure}
	\centering
	\includegraphics[width=0.49\linewidth]{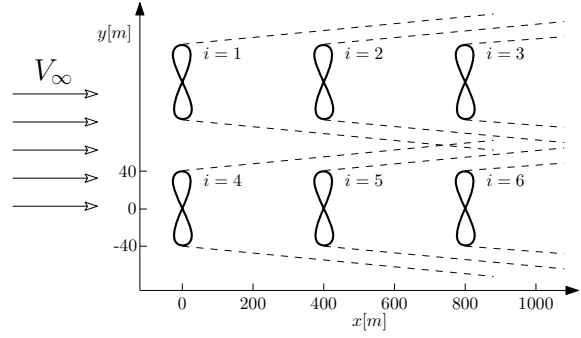}
	\caption{Wind farm layout with $ n=6 $ turbines with diameter 80 m, placed at (0,200), (400,200), (800,200)  and (0,0), (400,0), (800,0). }\label{Fig:WindFarm}
\end{figure}
\begin{figure*}
	\centering
	\begin{subfigure}{0.49\textwidth}
		\centering
		\includegraphics[width=\linewidth]{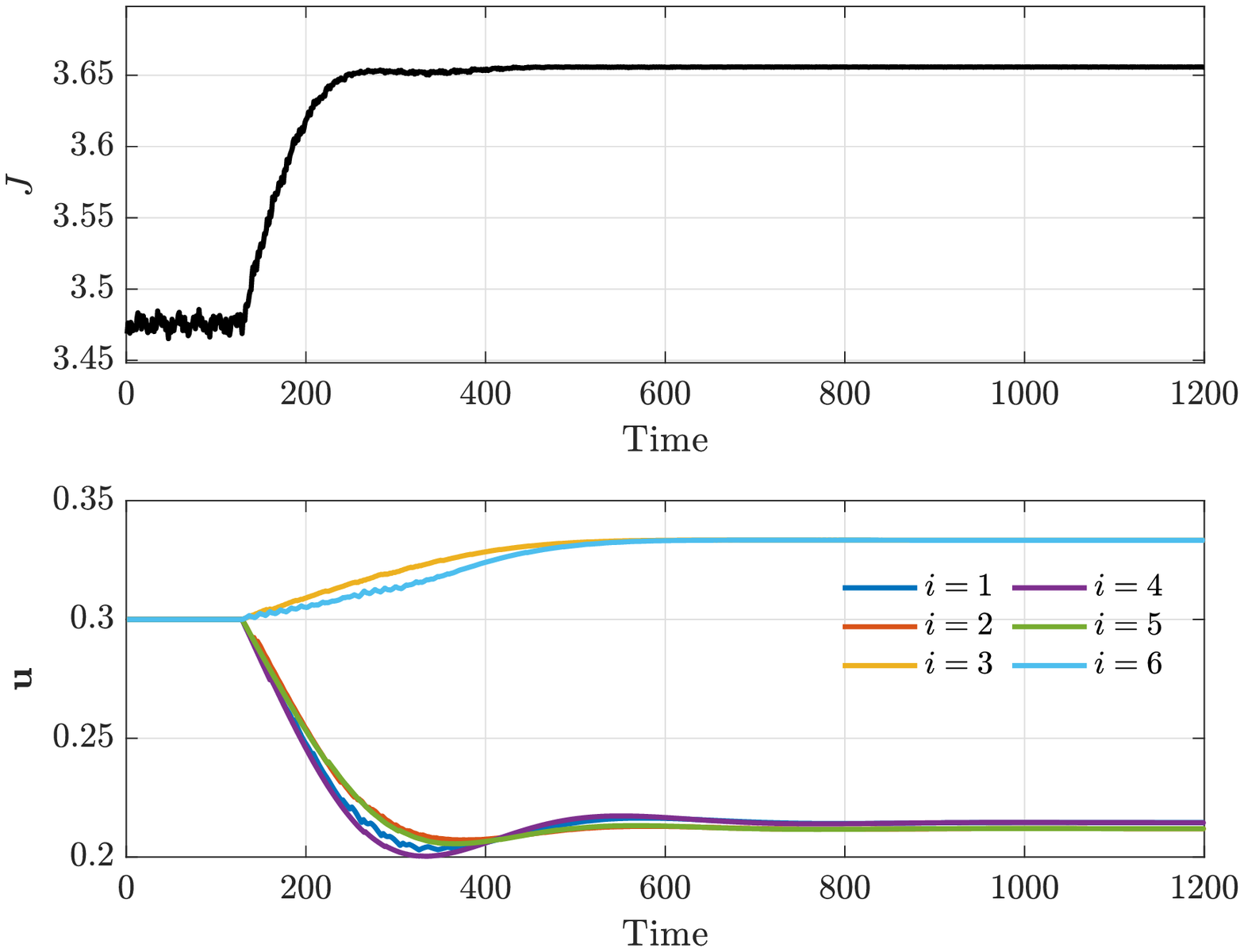}
		\caption{}\label{Fig:WFsim}
	\end{subfigure}
	\begin{subfigure}{0.49\textwidth}
		\centering
		\includegraphics[width=\linewidth]{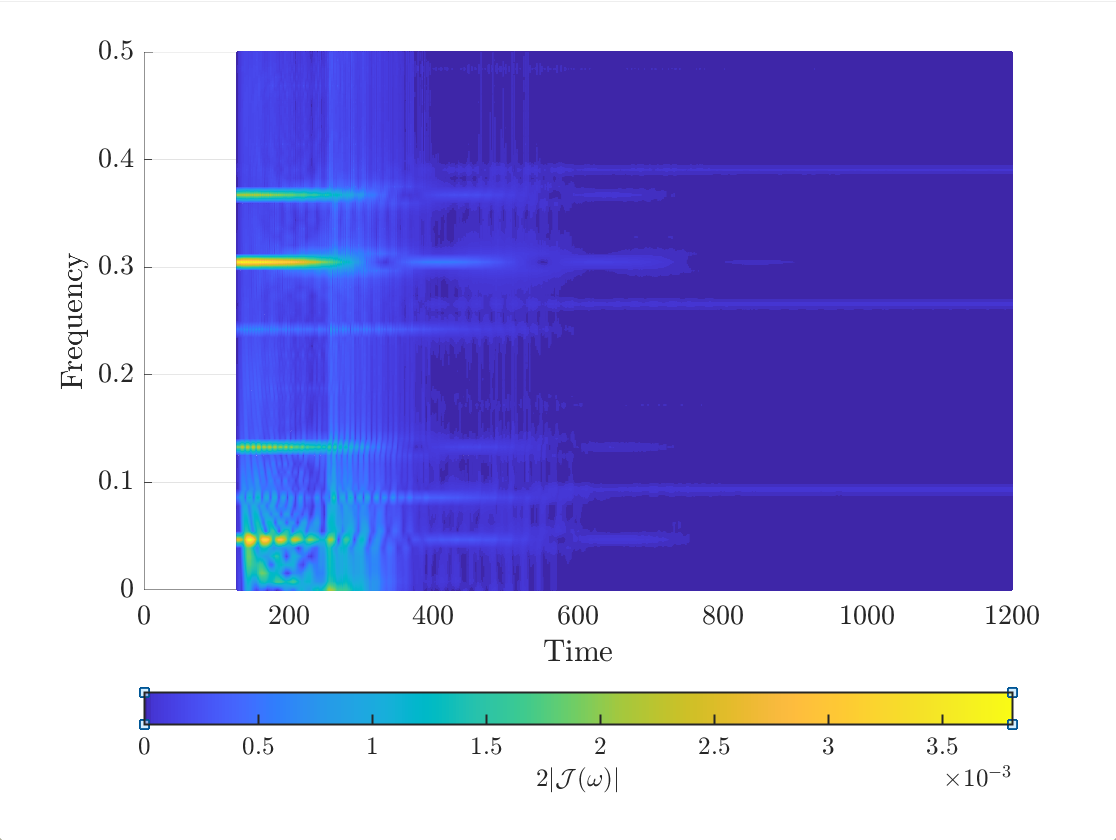}
		\caption{}\label{Fig:WFspectrogram}
	\end{subfigure}
	\caption{Example 1 without noise: (a) Simulation results showing the overall power production from the wind farm (in MW), the optimal axial induction factors for the three turbines, and the corresponding steady-state gradients estimated using the proposed method. (b) Spectrogram showing the power spectrum as a function of time. }
\end{figure*}
\begin{figure*}
	\centering
	\begin{subfigure}{0.49\textwidth}
		\centering
		\includegraphics[width=\linewidth]{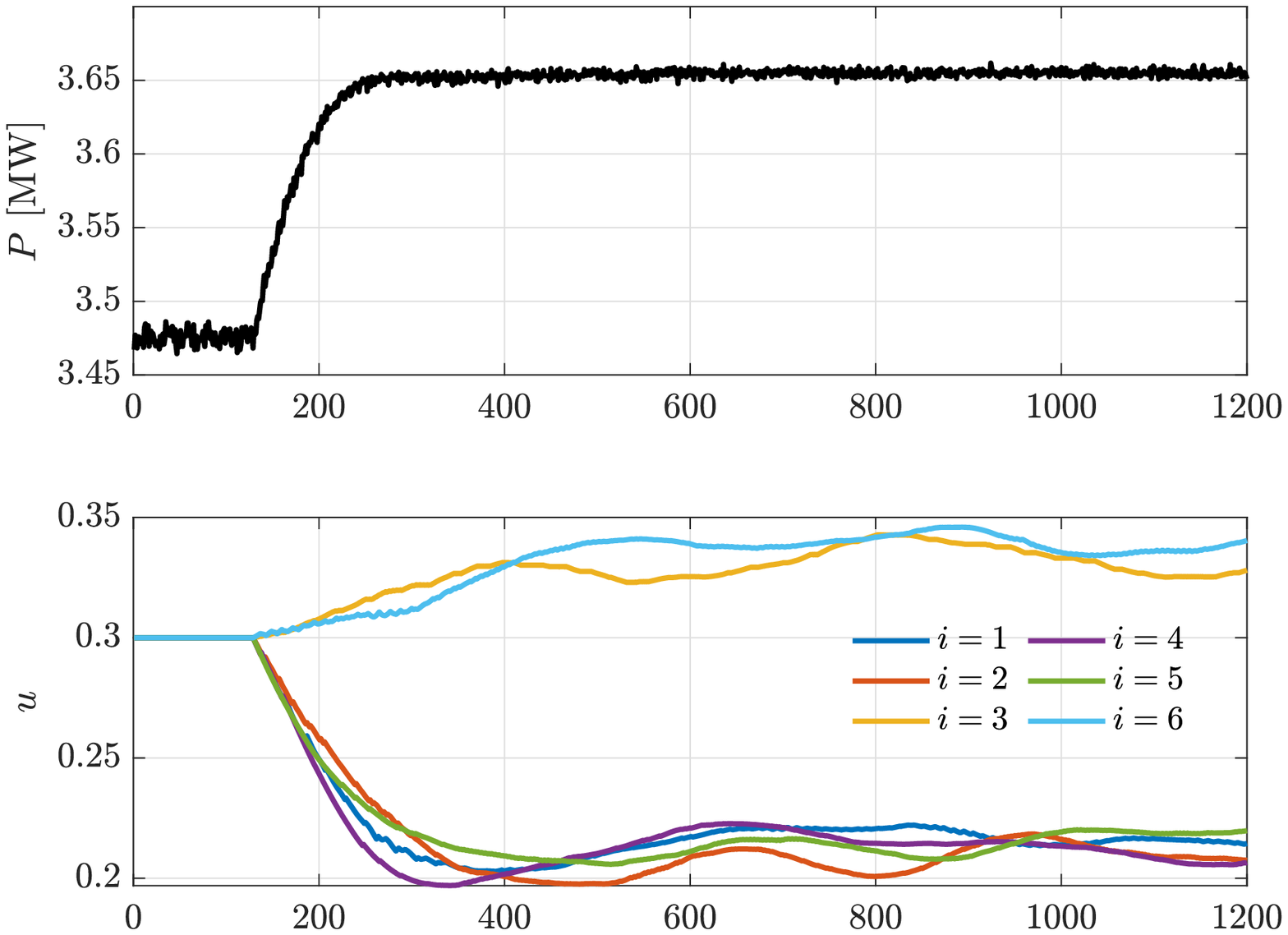}
		\caption{}
	\end{subfigure}
	\begin{subfigure}{0.49\textwidth}
		\centering
		\includegraphics[width=\linewidth]{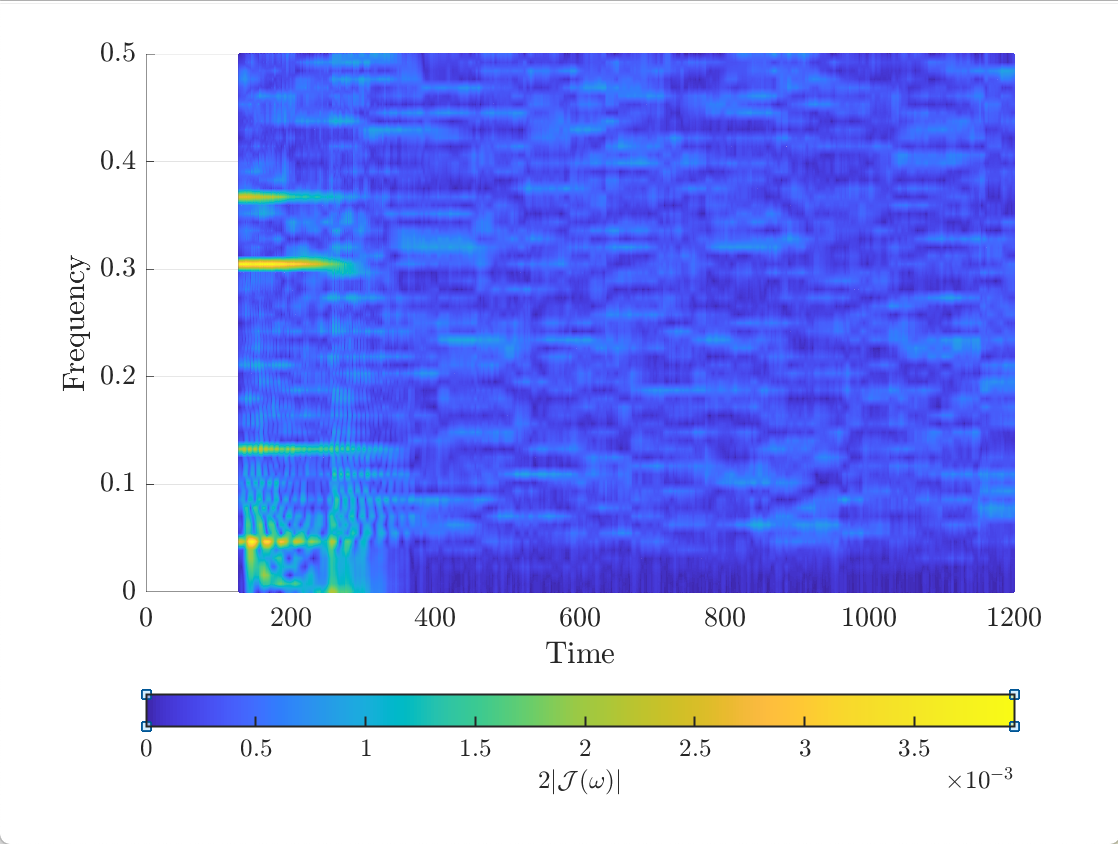}
		\caption{}
	\end{subfigure}
	\caption{Example 1 with noise: (a) Simulation results showing the overall power production from the wind farm (in MW), the optimal axial induction factors for the three turbines, and the corresponding steady-state gradients estimated using the proposed method. (b) Spectrogram showing the power spectrum as a function of time. } \label{Fig:WFsim2}
\end{figure*}

We set the initial axial induction factors to $ u_{i} =0.3$ for all the wind turbines. 
The six inputs channels are superimposed with sinusoidal signals with frequencies $ f = \begin{bmatrix}
	6&17&31&39&47&11
\end{bmatrix}/128  $ 
and amplitude $ a_{i}= $ 0.003.
The window length was chosen to be of $ N= $128 samples, according to Theorem~\ref{thm:sizeN}. 

The simulation results with the proposed FFT-based multivariable extremum seeking control is show in Fig.~\ref{Fig:WFsim}, where it can be seen that the proposed approach is able to drive the different inputs to the optimum such that the  total power generated by the wind farm is maximized. 
Fig.~\ref{Fig:WFspectrogram} shows the spectrogram, where it can be seen that the amplitude of the power spectrum at the perturbation frequencies become zero as the system converges to the optimum. 
The same simulation results are repeated when the total power measurement is corrupted with a measurement noise of $ w(t) \sim \mathcal{N}(0,2\times10^{-3}) $  MW. The time series as wells as the spectrogram for this case is shown in Fig.~\ref{Fig:WFsim2}. 


\subsection{Example 2: Maximizing heat recovery from a heat exchanger network}
Consider an industrial cluster with $ n $ different  processing plants, the details of which are not important. The different processes in the industrial cluster generates some amount of surplus heat,  which is to be recovered using a heat exchanger network, and transferred to a local district heating system, as shown in Fig.~\ref{Fig:HEx}. 
\begin{figure}
	\centering
	\includegraphics[width=\linewidth]{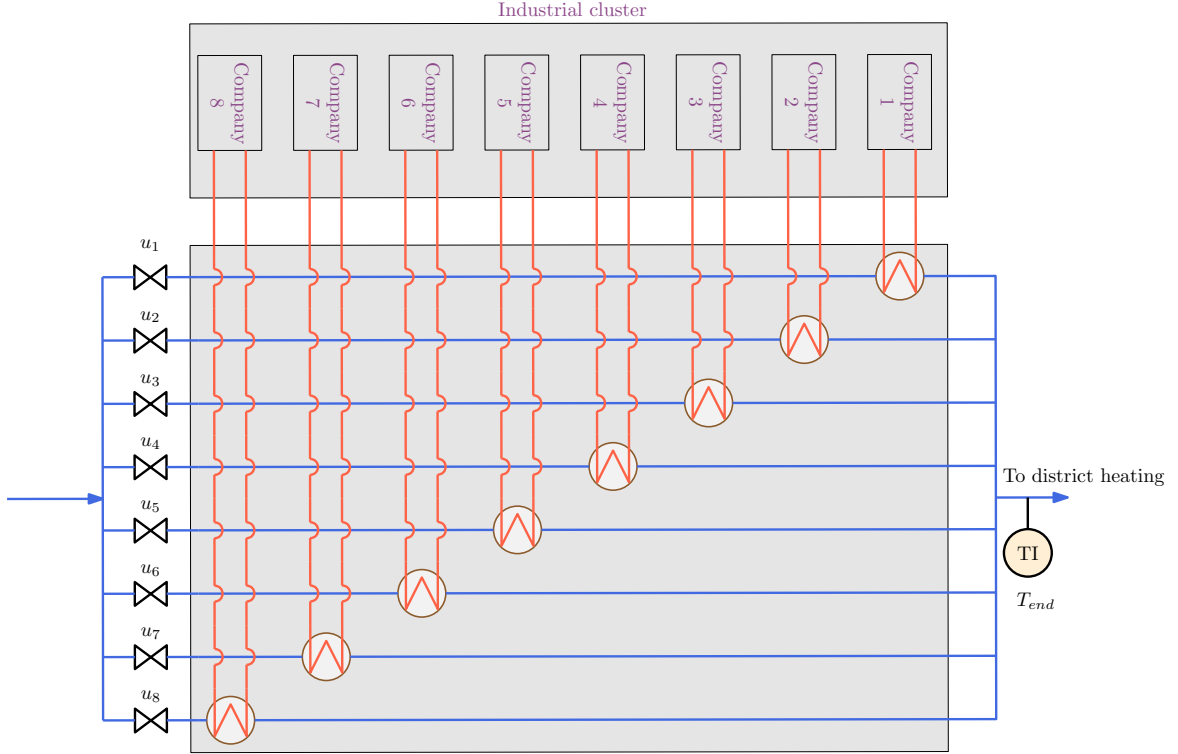}
	\caption{Schematic representation of the heat recovery network from an industrial cluster.}\label{Fig:HEx}
\end{figure}
The cold stream inlet is split into eight different streams, one for each company in the industrial cluster. The objective here is to adjust the split ratio $ u_{i} $ for each stream in order to maximize the total heat recovery, which can be equivalently stated as maximizing the end temperature of the cold stream $ T_{end} $ \cite{jaschke2014optimal}. The optimization problem can then be stated as,
\begin{equation}\label{Eq:Opt}
	\max_{\mathbf{u}} \;T_{end}(\mathbf{u})
\end{equation}
where $\mathbf{u} = [u_{1},\dots,u_{n-1}]$ and $ u_{n} = 1 - \sum_{i=1}^{n-1} u_{i}$. 

As the temperature and flow rate of the hot stream from the industrial cluster varies, or if the overall heat transfer coefficient in the heat exchanger changes (e.g. due to heat exchanger  fouling), this requires adjusting the split ratios in order to maximize the temperature.  This process is inspired by the batch reactor heat recovery challenge problem from \cite{jaschke2012challenge}, where the authors considered a similar problem, and later proposed the concept of controlling the  so-called \textit{J\"{a}schke Temperature}  \cite{jaschke2014optimal} in order to achieve optimal operation. The \textit{J\"{a}schke Temperature}  is essentially the model gradient based on a simplified countercurrent heat exchanger model with arithmetic mean temperature difference.

In this industrial symbiosis network, we assume that no information regarding the heat exchanger network such as temperatures, flow rate, heat transfer coefficient etc. are available, and the model of the system is also not available.  The end temperature $ T_{end} $ is the only measurement that is available for online process optimization. Note that one does not even need to know if the heat exchangers are of countercurrent or concurrent type. The details of the simulator model used in this section can be found in  Appendix~\ref{app:DistrictHeating}. Note that  this model is only used to simulate the process and no knowledge of this model is used by the extremum seeking controller.

We set the initial split ratios to $ u_{i} =0.1$ for $ i=1,\dots,7 $. The temperature of the hot streams are initially $  T_{h,in} = \begin{bmatrix}
	120&130&120&140&120&125&115&110
\end{bmatrix}^{\mathsf{T}} {}^{\circ}$C. After 2000 time steps, the temperature of the first hot stream increases to $ T_{h,in}^{1} = 150^{\circ}$C.
The seven inputs channels are superimposed with sinusoidal signals with frequencies $
	f = \begin{bmatrix}
		6&11&17&23& 31&39&47
	\end{bmatrix}^{\mathsf{T}}/128 $
and amplitude $ a_{i}= $ 0.003 for $ i=1,5,6 $ and $ a_{i}= 0.002 $ for $ i=2,3,4,7 $.
The window length was chosen to be of $ N= $128 samples according to Theorem~\ref{thm:sizeN}. The integral gain was chosen to be $ K_{i} = 7.5\times10^{-6} $ for all $ i = 1,\dots,7 $. The simulation results with the proposed FFT-based multivariable extremum seeking control is show in Fig.~\ref{FFTESC:Fig:DistrictHeating}, where it can be seen that the proposed approach is able to drive the different inputs to the optimum such that the  end temperature  supplied to the district heating  is maximized. 

\begin{figure*}
	\centering
	\includegraphics[width=\linewidth]{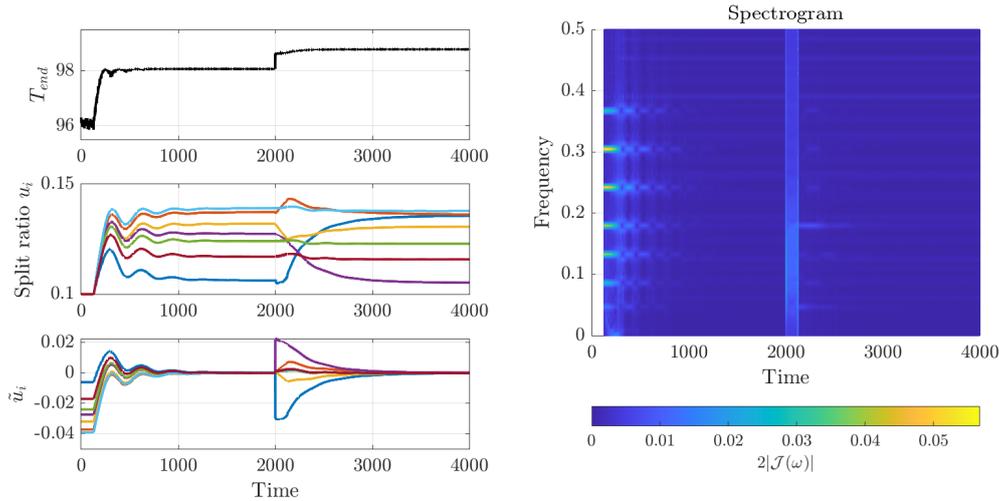}
	\caption{Example 2: Simulation results showing the end temperature recovered from the heat exchanger network, the optimal split ratios for the different flow streams, and the spectrogram showing the power spectrum of the end temeprature measurement as a function of time. }\label{FFTESC:Fig:DistrictHeating}
\end{figure*}

\section{Conclusion}
This paper presented a  multivariable extremum seeking scheme, where the gradient  is estimated using spectral analysis obtained using fast Fourier transform (FFT) as shown in Algorithm~\ref{FFTESC:alg}. The proposed approach requires relatively less tuning parameters compared to the classical extremum seeking control. The choice of the window length $ N $  is formalized in Theorem~\ref{thm:sizeN}. Lemma~\ref{thm:EstErr} showed that the gradient estimation error  due to a moving window length is bounded and Theorem~\ref{thm:convergence} provides bounds on  the gain $ {K}_{i} $ such that the algorithm asymptotically converges to the optimum in the static map setting.   

\section*{Acknowledgments}
Useful discussions with Prof. Sigurd Skogestad at the Norwegian University of Science and Technology, as well as with Dr. P\r{a}l Kittilsen at Equinor Research Center are gratefully acknowledged.

\bibliographystyle{IEEEtran}
\bibliography{DinBib}             
   
   \appendix
   \section{Illustrative  Example}\label{app:example}
Consider the univariate case, where the static map is given by 
\begin{equation}\label{Eq:1DEx}
	J =  -100(u-0.5)^2
\end{equation} 
We use the proposed FFT-based extremum scheme from  Algorithm~\ref{FFTESC:alg}, where the input is perturbed with a frequency of $ f =0.125$~Hz and amplitude of $ a = 0.01 $. We use a window length of $ N=128 $. Fig.~\ref{Fig:GradErrBound} shows the raw and  detrended signal, and the single-sided amplitude spectrum  of the input $ u $ (left hand side subplots) and the cost $ J $ (right hand side subplots) captured at a given time $ t $.  Here we see that the input $ u $ is moving from 0.2 to 0.4, and the single-sided amplitude spectrum has a magnitude of $2|\mathcal{U}(\omega)| = 0.01$ at $ l=0.125 $ that corresponds to the input perturbation as expected. As the  input $ u $ changes from 0.2 to 0.4, the amplitude in the output changes from 0.6 to 0.2 respectively. The tapering of the amplitude is clearly seen in the detrended cost signal in Fig.~\ref{Fig:GradErrBound} (right middle subplot). With a perturbation amplitude of 0.01, this implies that the gradient changes from 60 to 20 in this interval. 
Since spectral analysis shows the power of a signal as the mean squared amplitude at each frequency, the output power spectrum at $ l=0.125 $ indicates that the cost varies with an  amplitude close to 0.4 (i.e. gradient = 40).  Thus, we see that the estimated amplitude in the output power spectrum corresponds to the true amplitude for some input over the last $ N $ samples. This example provides an intuitive understanding of the error bound on the gradient estimate formalized in  Lemma~\ref{thm:EstErr}.
In this simple example, the Hessian $ \mathbf{H}=-200 $ and with $\Delta \mathbf{u} = 0.4-0.2=0.2  $, the upper bound on the gradient estimation error is $ |\epsilon|\leq $ 40, which is reasonable given that the true gradient may be between 60 and 20 as noted. 
\begin{figure*}
	\begin{subfigure}{0.5\linewidth}
	\centering
	\includegraphics[width=\linewidth]{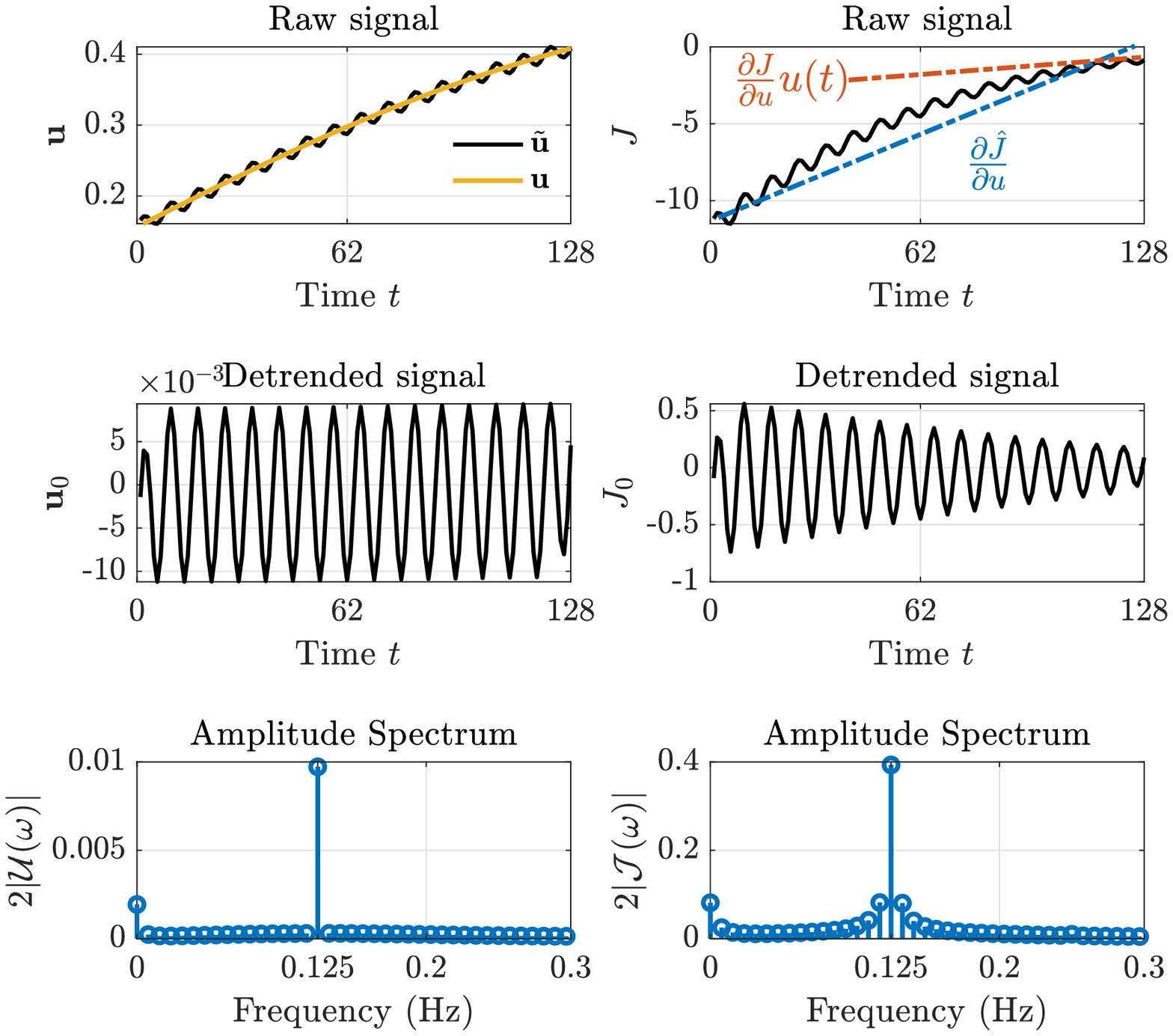}
	\caption{}\label{Fig:GradErrBound}
	\end{subfigure}
\begin{subfigure}{0.5\linewidth}
	\centering
	\includegraphics[width=\linewidth]{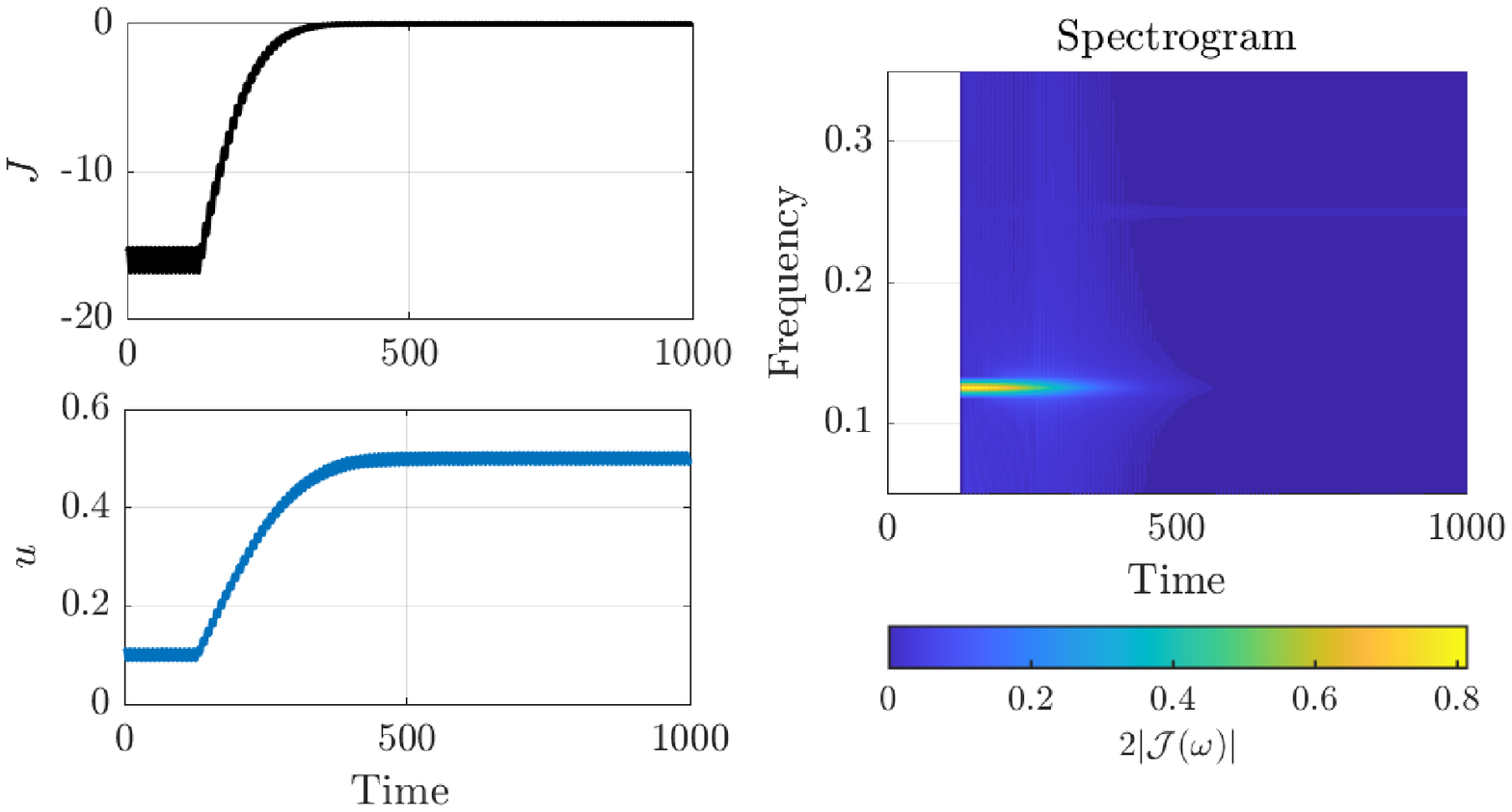}
	\caption{}\label{Fig:Sim1}
\end{subfigure}
	\caption{(a) Illustrative example showing the amplitude error bound when $ \Delta u >0 $. (b) Simulation results using the proposed FFT-based extremum seeking scheme. Right subplot shows the spectrogram. }
\end{figure*}

The simulation results with an integral gain of $ K_{I} = -1.5E-5 $ is shown in Fig.~\ref{Fig:Sim1}. The spectrogram which shows the power spectrum as a function of time is shown in the right subplot in Fig.~\ref{Fig:Sim1}, where it can be seen that the amplitude at $ f = 0.125 $ goes to zero, since at the optimum, the input perturbation leads to a periodic variation of the output at twice the perturbation frequency. This is also seen in the spectrogram at $ f = 0.25 $.

\section{Wind Farm Model}\label{app:windFarm}
In this section we present the wind farm model used in Section~\ref{sec:windfarm} Note that the model is used only to simulate the plant, and no information about the model is used by the extremum seeking algorithm. 

Consider a wind farm with $ n $ wind turbines denoted by the set $ \mathcal{W} = \{1,\dots,n\} $, placed at the co-ordinates $ (x,y) $ relative to a common vertex. The turbines are assumed to be uniform with a diameter $ D $. The axial induction factor for the wind turbines are denoted by $ \mathbf{u} = [u_{1},\dots,u_{n}] $, which are the manipulated variables (degrees of freedom) for optimization. The set of admissible axial induction factors is given by  $ 0\leq u_{i}\leq 0.5 $. Physically, the axial induction factor can be changed by adjusting the blade pitch angle. 

\subsection{Wake interaction model}
When wind passes through a turbine, it creates turbulence, which is carried downwards. This is known as wake effect, which affects the wind turbines that are downwind. To model the wake interaction, we use the momentum balance and the blade element and momentum (BEM) theory \cite{katic1987simple}.
The aggregate wind velocity for any turbine $ i \in \mathcal{W} $ is expressed as,
\begin{equation}\label{Eq:WindVelocity}
V_{i}(\mathbf{u}) = V_{\infty}(1-\delta V_{i}(\mathbf{u}))
\end{equation}
where $ V_{\infty} $ denotes the free stream wind speed (in  m/s), and $ \delta V_{i}(\mathbf{u}) $ denotes the aggregated velocity deficit seen by the turbine $ i $ due to the wake generated by the other upwind turbines. This is given by the expression, 
\begin{equation}\label{Eq:Velocitydeficit}
\delta V_{i}(\mathbf{u}) = 2\sqrt{\sum_{j\in \mathcal{W}:x_{j}<x_{i}} \left(a_{j}\left(\frac{D_{j}}{D_{j+2k(x_{i}-x_{j})}}\right)^2 \frac{A^{overlap}_{j,i}}{A_{i}}\right)^2  }
\end{equation}
where $ A_{i} $ is the sweep area of turbine $ i $ and $ A^{overlap}_{j,i} $ is the overlap of the sweep area between turbine $ i $ and turbine $ j$ at a distance $ x $ downstream.  $ k $ is the roughness co-efficient that defines the angle at which the wake expands out of the turbine. The roughness co-efficient typically takes the value $ k = 0.075 $ for farmlands and $ k = 0.04 $ for offshore wind turbines \cite{sorensen2008adapting}.

\subsection{Power model}
The power (in W) generated by any turbine $ i \in \mathcal{W} $ takes the form, 
\begin{equation}\label{Eq:Power}
P_{i}(\mathbf{u}) = \frac{1}{2}\rho_{air}A_{i}C_{P}(u_{i})V_{i}(\mathbf{u})^3
\end{equation}
where, $ \rho_{air} = 1.225 $ kg/m$ ^3 $ is the air density and $ V_{i}(\mathbf{u}) $  is the wind speed at turbine $ i $ given by  \eqref{Eq:WindVelocity} and \eqref{Eq:Velocitydeficit}. 
$ C_{P} $ is the power efficiency co-efficient characterized by,
\begin{equation}\label{Eq:PowerEff}
C_{P}(u_{i}) = 4u_{i}(1-u_{i})^2
\end{equation}
The total power generated by the wind farm is then simply given by,
\begin{equation}\label{Eq:TotalPower}
P(\mathbf{u}) = \sum_{i\in\mathcal{W}} P_{i}(\mathbf{u})
\end{equation}

\section{Heat exchanger network model}\label{app:DistrictHeating}
Consider a heat exchanger network with $ n $ flow streams with no accumulation as shown in Fig.~\ref{Fig:DistrictHeating}. Cold water at temperature $ T_{c,in} $with a flow rate $ w_{c} $ is split into $ n $ streams, with a split ratio given by $ u_{i} $ for all $ i = 1,\dots,n$. The mass balance dictates that \[ u_{n} = \sum_{i=1}^{n-1}u_{i} \]Each flow stream has an heat exchanger with a hot stream at temperature $ T_{h,in}^{(i)} $ and flow rate $ w_{h}^{(i)} $.  The outlet temperatures of the cold and hot stream from the $ i^{\text{th}} $ heat exchanger is given by,
\begin{align*}
	T_{c,out}^{(i)}&= T_{c,in} + \frac{UA^{(i)} \Delta T_{lm}^{(i)}}{u_{i}w_{c} c_{p}}\\
	T_{h,out}^{(i)} &= T_{h,in}^{(i)} - \frac{UA^{(i)} \Delta T_{lm}^{(i)}}{w_{h}^{(i)} c_{p}}
\end{align*}
where $ UA^{(i)} $ is the overall heat transfer coefficient of heat exchanger $ i $ (in W/$^\circ$C), $ c_{p} $ is the specific heat capacity in kJ/kg/$^\circ$C,
\begin{align*}
\Delta T_{lm}^{(i)}&:= \frac{\Delta T_{1}^{(i)} - \Delta T_{2}^{(i)}  }{\ln\left(\Delta T_{1}^{(i)}/\Delta T_{2}^{(i)} \right)} \\
&\approx \left[ \Delta T_{1}^{(i)}\Delta T_{2}^{(i)} \left(\frac{\Delta T_{1}^{(i)}+\Delta T_{2}^{(i)}}{2}\right) \right]^{1/3}
\end{align*}
is the log-mean temperature difference, and for counter-current flow $ \Delta T_{2}^{(i)}:=T_{h,out}^{(i)} - T_{c,in}^{(i)} $ and $ \Delta T_{1}^{(i)}:= T_{h,in}^{(i)} - T_{c,out}^{(i)}$.
Assuming linear mixing, the end tmperature $ T_{end} $ is given by
\begin{equation}
	T_{end} = \sum_{i=1}^{n}u_{i}T_{c,out}^{(i)}
\end{equation}


\end{document}